\newtheorem{thm}{\sc Theorem}[section]
\newtheorem{prop}[thm]{\sc Proposition}
\theoremstyle{definition}
\theoremstyle{definition}
\theoremstyle{definition}
\theoremstyle{definition}
\theoremstyle{definition}
\theoremstyle{definition}
\numberwithin{equation}{section}
\let\chitalicit\relax\fi
\let\chbfit\relax\fi
\begin{document}
\title[Conformal Geometry and Composite Membranes]{Conformal Geometry and the Composite Membrane Problem}
\author[S. Chanillo]{SAGUN CHANILLO}
\address{S. Chanillo, Department of Mathematics, Rutgers University, 110 Frelinghuysen Rd., Piscataway, NJ 08854, U.S.A.}
\email{chanillo@math.rutgers.edu}

\keywords{}\subjclass{Primary 35R35, 35J60, 35B65, 58J50}
\renewcommand{\subjclassname}{\textup{2000} Mathematics Subject
Classification}

\begin{abstract}
We show that a certain eigenvalue minimization problem in conformal
classes is equivalent to the composite membrane problem in two
dimensions. New free boundary problems of unstable type arise in
higher dimensions linked to the critical GJMS operator. In dimension
$4$ the critical GJMS operator is exactly the Paneitz operator.
\end{abstract}

\maketitle  We wish to study here a minimization problem for
eigenvalues on Riemann surfaces. Consider then $(\Omega^2,g_0)$ a
surface which is bounded and has a smooth boundary. The surface is
endowed with a metric $g_0$. We consider all metrics $g$ that are in
the same conformal class as $g_0$ and write $g\in [g_0]$. That is
  \begin{equation}\label{metdef}
  g=e^{2u}g_0.
  \end{equation}
We denote the Laplace-Beltrami operator for $g$ by $\Delta_g$. We
will now fix two constraints and consider only those metrics $g$
that satisfy two properties.
\begin{enumerate}
\item There is a constant $A$, such that for all metrics $g$ conformal to $g_0$ through (\ref{metdef}) we have, $||u||_{L^\infty(\Omega)}\leq
A$.
\item We will prescribe the volume of the conformal class, that is
we prescribe $M>0$ so that for all metrics
                       $$\int_\Omega dV_g=\int_\Omega e^{2u} dV_{g_0}=M.$$
\end{enumerate}
All metrics conformal to $g_0$ and satisfying the two constraints
above will be said to lie in class $C$.
 Now under the two constraints above we seek to minimize the first
eigenvalue of $-\Delta_g$ with zero boundary conditions. That is we
seek to minimize:

\begin{equation}\label{minim}
  \inf_{g\in C}\inf_{\phi\in H^1_0(\Omega)}\frac{\int_\Omega |\nabla
  _g\phi|_g^2dV_g}{\int_\Omega |\phi|^2 dV_g}
  \end{equation}

We now have the simple proposition:
\begin{prop}\label{compmem} The minimization problem above is equivalent to the
Composite Membrane Problem.
\end{prop}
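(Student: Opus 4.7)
The plan is to use the fact that in dimension two the Dirichlet energy is conformally invariant, so the transformation $g=e^{2u}g_0$ moves all the $u$-dependence into the denominator of the Rayleigh quotient, where it plays the role of a density. After that, a change of variable $\rho=e^{2u}$ turns the two constraints on $u$ into the standard pointwise bounds and mass prescription of the composite membrane problem.

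First I would record the conformal transformation rules in dimension two: $dV_g = e^{2u}\,dV_{g_0}$ and $|\nabla_g\phi|_g^2 = e^{-2u}|\nabla_{g_0}\phi|_{g_0}^2$. Multiplying these gives the well-known two-dimensional identity
\begin{equation*}
\int_\Omega |\nabla_g\phi|_g^2\,dV_g \;=\; \int_\Omega |\nabla_{g_0}\phi|_{g_0}^2\,dV_{g_0},
\end{equation*}
so the numerator in (\ref{minim}) does not see the conformal factor at all, while the denominator becomes $\int_\Omega e^{2u}|\phi|^2\,dV_{g_0}$. Hence (\ref{minim}) equals
\begin{equation*}
\inf_{u}\inf_{\phi\in H^1_0(\Omega)} \frac{\int_\Omega |\nabla_{g_0}\phi|_{g_0}^2\,dV_{g_0}}{\int_\Omega e^{2u}|\phi|^2\,dV_{g_0}},
\end{equation*}
where $u$ ranges over $L^\infty$ functions with $\|u\|_{L^\infty}\leq A$ and $\int_\Omega e^{2u}\,dV_{g_0}=M$.

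Next I would substitute $\rho=e^{2u}$. The $L^\infty$ bound becomes the pointwise constraint $e^{-2A}\le \rho\le e^{2A}$, and the volume constraint becomes $\int_\Omega \rho\,dV_{g_0}=M$. Setting $\alpha:=e^{-2A}$ and $\beta:=e^{2A}$, the minimization takes the form
\begin{equation*}
\inf_{\rho}\inf_{\phi\in H^1_0(\Omega)} \frac{\int_\Omega |\nabla_{g_0}\phi|_{g_0}^2\,dV_{g_0}}{\int_\Omega \rho\,|\phi|^2\,dV_{g_0}}, \qquad \alpha\le \rho\le\beta,\ \int_\Omega\rho\,dV_{g_0}=M,
\end{equation*}
which is precisely the composite membrane problem on $(\Omega,g_0)$: minimize the first Dirichlet eigenvalue of $-\Delta_{g_0}\phi=\lambda\rho\phi$ among densities $\rho$ lying between $\alpha$ and $\beta$ with prescribed total mass $M$. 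Since the correspondence $u\mapsto \rho=e^{2u}$ is a bijection between admissible $u$ (class $C$) and admissible $\rho$, the two infima are equal and achieved on corresponding classes.

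The only point requiring care is the correspondence of admissible classes and the fact that measurable $\rho$ with $\alpha\le\rho\le\beta$ gives back $u=\tfrac{1}{2}\log\rho$ with $\|u\|_{L^\infty}\le A$, so nothing is lost in the substitution. I expect no serious obstacle here; the proof is essentially the conformal invariance of the 2D Dirichlet integral plus this bijective change of variable. The main conceptual point to emphasize is that the conformal invariance is special to dimension two, which is what forces the higher-dimensional analogues discussed later in the paper to involve the critical GJMS operator rather than the plain Laplacian.
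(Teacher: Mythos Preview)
Your proposal is correct and follows essentially the same route as the paper: use the two-dimensional conformal invariance of the Dirichlet energy to push all $u$-dependence into the denominator, then substitute $\rho=e^{2u}$ so that the $L^\infty$ bound on $u$ and the volume constraint become the pointwise bounds $e^{-2A}\le\rho\le e^{2A}$ and the mass prescription $\int_\Omega\rho\,dV_{g_0}=M$ of the composite membrane problem. Your write-up is slightly more explicit than the paper's (you spell out $dV_g=e^{2u}dV_{g_0}$, $|\nabla_g\phi|_g^2=e^{-2u}|\nabla_{g_0}\phi|_{g_0}^2$, and the bijectivity of $u\mapsto\rho$), but the argument is the same.
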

\begin{proof} The proof follows by simply noting that the numerator
in the Rayleigh quotient in (\ref{minim}) is a conformal invariant
as we are in two dimensions. That is
           $$\int_\Omega |\nabla_g\phi|_g^2 dV_g=\int_\Omega |\nabla_{g_0}
           \phi|_{g_0}^2 dV_{g_0}.$$
Set $\rho =e^{2u}$, and so the problem (\ref{minim}) by virtue of
the constraints can be re-written as: For given $\lambda=e^{-2A}>0,\
\Lambda=e^{2A}<\infty,\ M>0$ such that $0<\lambda
\leq\rho\leq\Lambda<\infty$ solve the minimization problem,
\begin{equation}\label{changrie}\inf_{\int_\Omega \rho dV_{g_0}=M}\inf_{\phi\in
H^1_0(\Omega)}\frac{\int_\Omega|\nabla_{g_0}\phi|^2\
dV_{g_0}}{\int_\Omega|\phi|^2\rho\
        dV_{g_0}}.
        \end{equation}
In the case our background metric $g_0=dx^2+dy^2$ is the flat
metric, then this last minimization problem (\ref{changrie}) is
exactly the one treated in Theorem 13, \cite{CMP}, i.e. the
Composite Membrane Problem.
\end{proof}

The existence and regularity of the minimization problem associated
with the composite membrane problem is the subject of many articles,
\cite{CMP}, \cite{cm}, \cite{shah}, \cite{blank}, \cite{mw},
\cite{ckenig} and \cite{ckto} among others. Thus these articles now
give complete information about the minimization of eigenvalues in
conformal classes, the existence and regularity of the limit metric
and the associated eigenfunction and more crucially the optimal
$C^{1,1}$ regularity of the minimizing eigenfunction. The limit
metric and thus the associated eigenfunction need not be unique due
to a symmetry breaking phenomena \cite{CMP}. We point out that
unlike a traditional minimization problem for eigenfunctions, we
have to find a minimizing pair $(u_\infty,\phi_\infty)$. We remark
that our regularity results rely on blow-up analysis and so
curvature has no role to play in regularity issues. We may summarize
the results in \cite{CMP}, \cite{cm}, \cite {ckenig} and \cite{ckto}
as applied to eigenvalue minimization (\ref{minim}) in the form of a
theorem. The proofs essentially follow by using $\rho=e^{2u}$. We
also restrict to the case $g_0$ is flat.
\begin{thm} There exists a limit metric $\rho_\infty
  g_0=e^{2u_\infty}g_0$ and an associated limit eigenfunction $\phi_\infty$,
  such that,
\begin{enumerate}
\item
     $$ e^{2u_\infty}=\lambda \chi_D+\Lambda\chi_{D^c}$$
where $D\subseteq\Omega$ and $D^c$ denotes the complement of $D$
in $\Omega$.
\item
 $D$ is a sub-level set of the eigenfunction $\phi_\infty$, that is
there exists $c>0$ such that,
        $$D=\{z\in \Omega|\ \phi_\infty(z)\leq c\}.$$
\item
    The limiting eigenfunction $\phi_\infty$ belongs to
$C^{1,1}(\overline{\Omega})$. In particular $\phi_\infty\in
W^{2,2}(\overline{\Omega})$. The $C^{1,1}$ regularity of
$\phi_\infty$ is optimal.
\item
     $D^c$ has finitely many components, and the free boundary
  $\partial D^c$ consists of finitely many, simple, closed
  real-analytic curves.
\item Due to symmetry breaking the function $u_\infty$ associated to
  the limiting metric and the eigenfunction $\phi_\infty$ is not
necessarily unique. If $\Omega$ is a disk, then $u_\infty$ and the eigenfunction
is unique. Additional hypotheses on convex $\Omega$ does guarantee
uniqueness of $(u_\infty,\phi_\infty)$, see \cite{ckenig}.
\item If $\Omega$ is simply-connected, then $D$ is connected.
\end{enumerate}
\end{thm}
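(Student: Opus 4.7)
The plan is to transport every assertion to the composite membrane framework via the substitution $\rho=e^{2u}$, under which Proposition~\ref{compmem} turns problem (\ref{minim}) into the weighted Rayleigh quotient (\ref{changrie}) with pointwise bounds $\lambda\le\rho\le\Lambda$ and mass constraint $\int_\Omega\rho\,dV_{g_0}=M$. A minimizing pair $(\rho_\infty,\phi_\infty)$ is produced by a direct-method argument (weak-$*$ compactness of $\rho$ in $L^\infty$ combined with $H^1_0$-compactness of $\phi$ in $L^2$), and after that each of the six claims becomes a direct appeal to results of \cite{CMP}, \cite{cm}, \cite{ckenig}, \cite{ckto}, translated back through $u_\infty=\tfrac12\log\rho_\infty$.

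For (1) and (2) I would run the classical bang-bang rearrangement. If $\rho_\infty$ took values in the open interval $(\lambda,\Lambda)$ on a set of positive measure, one could strictly decrease the Rayleigh quotient by redistributing $\rho_\infty$ toward the super-level sets of $\phi_\infty^2$: mass shifted to where $\phi_\infty^2$ is larger increases $\int\rho|\phi_\infty|^2$ while leaving $\int\rho$ and the numerator unchanged. This forces $\rho_\infty$ to take only the extreme values $\lambda$ and $\Lambda$, giving (1), and simultaneously identifies $D^c$ with a super-level set of $\phi_\infty$, yielding (2); both are the content of Theorem 13 of \cite{CMP}.

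The main obstacle, and heart of the theorem, is the $C^{1,1}$ regularity in (3). From (1) the eigenfunction satisfies $-\Delta\phi_\infty=\mu\rho_\infty\phi_\infty$ with $\rho_\infty\in L^\infty$, so standard elliptic theory gives $\phi_\infty\in W^{2,p}\cap C^{1,\alpha}$ for every finite $p$ and $\alpha<1$. The upgrade to $C^{1,1}$ cannot come from the equation alone, since the right-hand side genuinely jumps across $\partial D$; one must exploit the \emph{geometric} fact that $\partial D$ is itself a level set of $\phi_\infty$. Here I would follow the blow-up scheme of \cite{CMP} and \cite{shah}, sharpened in \cite{ckenig} and \cite{ckto}: rescale about a free-boundary point, extract a limit profile, and combine a monotonicity formula with a non-degeneracy estimate to exclude growth faster than quadratic. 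Optimality of $C^{1,1}$ is then automatic, since the jump in $\Delta\phi_\infty$ across $\partial D$ rules out $C^2$ regularity.

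The remaining statements are corollaries of (1)--(3). For (4), $\phi_\infty$ is real-analytic in the interiors of $D$ and $D^c$ (constant-coefficient elliptic equation on each side); the non-degeneracy bound $|\nabla\phi_\infty|\neq 0$ on $\partial D$ from \cite{CMP} and \cite{ckenig} lets the implicit function theorem realize $\partial D^c$ as a $C^{1,1}$ curve, and elliptic bootstrap promotes this to real-analyticity, while a uniform lower bound on the area of each component yields finiteness. For (5) I would simply invoke the symmetry-breaking construction and the radial uniqueness on the disk from \cite{CMP}, together with the convexity-type criteria of \cite{ckenig}. For (6), connectedness of $D$ on a simply-connected $\Omega$ follows from the observation that $\phi_\infty\ge 0$ has no interior local minimum (maximum principle applied to $-\Delta\phi_\infty=\mu\rho_\infty\phi_\infty\ge 0$), so its sub-level set $\{\phi_\infty\le c\}$ inherits the topology of $\Omega$.
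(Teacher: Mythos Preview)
Your approach matches the paper's exactly: the theorem is presented there as a summary of results from \cite{CMP}, \cite{cm}, \cite{ckenig}, \cite{ckto}, and the only ``proof'' offered is the remark that everything follows by the substitution $\rho=e^{2u}$ (with $g_0$ flat). Your sketch is therefore considerably more detailed than what the paper itself provides and is in line with the arguments of the cited references; the one place to be careful is item~(4), where the non-degeneracy $|\nabla\phi_\infty|\neq 0$ on $\partial D$ is not an a priori input but rather part of what \cite{ckto} must establish in two dimensions to rule out branch points of the unstable free boundary.
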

We now pass to a higher dimensional analog of the problem stated
above. This concerns the critical GJMS operator and its conformal
invariance properties. The GJMS hierarchy of conformally invariant
operators was constructed in \cite{gjms} and include the Paneitz
operator and the Yamabe operator.

Specifically we consider $(\Omega^n,g_0)$ and the associated
critical GJMS operator $P^{g_0}_{n/2}$. For us the results proved in
\cite{gjms}, \cite{gz} prove crucial. The operator $P^{g_0}_{n/2}$
has the property that if one considers the metric $g=e^{2u}g_0$,
then the GJMS operator in the new metric $P^g_{n/2}$ satisfies the
relation, (see \cite{gjms} \cite{gz})
\begin{equation}\label{panrel}
       P^g_{n/2}(\phi)= e^{-nu}P^{g_0}_{n/2}(\phi).
\end{equation}
The operator $P^g_{n/2}$ is an elliptic, self-adjoint operator with
leading term $(-\Delta_g)^{n/2}$. So in particular it is fourth
order in dimension $4$. This fourth order operator in dimension $4$
is the Paneitz operator. The operators in odd dimensions are
non-local pseudo-differential operators. One may consult \cite{ndia}
for the role in Conformal Geometry of the fractional operators that
arise.

We have the following proposition whose proof follows the same
scheme as the two dimensional case where instead we use
(\ref{panrel}). The proposition leads to a higher order free
boundary problem involving now the critical GJMS operator. In
dimension $4$ the operator that arises is the Paneitz operator and
for odd $n$ the operator is a fractional operator leading to
fractional free boundary problems which are unstable.

\begin{prop} Consider $(\Omega^n,g_0)$, with metrics $g$ conformal
to $g_0$ via the relation (\ref{metdef}) and which lie in class $C$.
Then the problem,
         $$\inf_{g\in C}\inf_{\phi\in H^{n/2}_0(\Omega)}\frac{\int_\Omega
         P^g_{n/2}\phi\phi dV_g}{\int_\Omega |\phi|^2dV_g},$$
is equivalent to
         $$\inf_{\int_\Omega\rho dV_{g_0}
         =M}\inf_{\phi\in H^{n/2}_0(\Omega)}\frac{\int_\Omega P^{g_0}_{n/2}\phi
         \phi\ dV_{g_0}}{\int_\Omega|\phi|^2\rho dV_{g_0}}.$$
with given $M>0,\ \lambda=e^{-nA}>0, \ \Lambda=e^{nA}<\infty$ and
$0<\lambda\leq\rho\leq\Lambda<\infty$ and where $\rho=e^{nu}$.
\end{prop}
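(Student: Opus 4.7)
The plan is to imitate Proposition \ref{compmem}, with the two-dimensional conformal invariance of the Dirichlet integral replaced by the conformal covariance relation (\ref{panrel}) for the critical GJMS operator. The entire argument is essentially algebraic once (\ref{panrel}) is in hand.

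First, I would use (\ref{panrel}) together with the volume-form identity $dV_g = e^{nu}\,dV_{g_0}$ (which follows from $g=e^{2u}g_0$ and $\dim \Omega = n$) to compute
\begin{equation*}
\int_\Omega P^g_{n/2}\phi\cdot\phi\,dV_g \;=\; \int_\Omega \bigl(e^{-nu}P^{g_0}_{n/2}\phi\bigr)\,\phi\,e^{nu}\,dV_{g_0} \;=\; \int_\Omega P^{g_0}_{n/2}\phi\cdot\phi\,dV_{g_0}.
\end{equation*}
Thus the numerator of the Rayleigh quotient is a conformal invariant, exactly as the Dirichlet numerator was in the two-dimensional case. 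This is the decisive observation: in dimension $n$, the critical GJMS operator $P^{g_0}_{n/2}$ plays the role the Laplacian plays in dimension $2$.

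Next, introduce $\rho = e^{nu}$, so that the denominator becomes
\begin{equation*}
\int_\Omega |\phi|^2\,dV_g \;=\; \int_\Omega |\phi|^2\,\rho\,dV_{g_0},
\end{equation*}
the prescribed-volume constraint $\int_\Omega dV_g = M$ rewrites as $\int_\Omega \rho\,dV_{g_0} = M$, and the uniform bound $\|u\|_{L^\infty(\Omega)}\le A$ translates into the pointwise two-sided bound $\lambda = e^{-nA} \le \rho \le e^{nA} = \Lambda$. Since $u\mapsto\rho = e^{nu}$ is a bijection between class $C$ and the admissible set of densities $\rho$, the numerators, denominators, and admissible sets in the two minimization problems match pairwise, and the two infima coincide.

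The one genuinely analytic point, which I regard as the main obstacle, is the identification of the test function space: $H^{n/2}_0(\Omega)$ is a priori defined with respect to a background metric. Because every $g = e^{2u}g_0$ in class $C$ has conformal factor uniformly bounded above and below (by $e^{\pm A}$), the Sobolev norms of order $n/2$ built from $g$ and from $g_0$ are equivalent, so the space of admissible test functions is the same for each $g\in C$ and one may take a single $H^{n/2}_0(\Omega)$ throughout. With this identification, the equality of the two Rayleigh quotients established above yields the proposition.
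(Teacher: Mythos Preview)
Your argument is correct and is precisely the approach indicated in the paper, which simply asserts that the proof ``follows the same scheme as the two dimensional case where instead we use (\ref{panrel}).'' If anything, you are more careful than the paper in explicitly noting that the uniform bound $\|u\|_{L^\infty}\le A$ makes the $H^{n/2}_0$ spaces for $g$ and $g_0$ coincide.
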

{\bf Acknowledgement:} We thank Robin Graham for several helpful and
clarifying comments regarding the properties of the GJMS operators.
The author was supported in part by NSF grant DMS-0855541.

\bibliographystyle{plain}

\end{document}